\newtheorem{theorem}{Theorem}[section]
\newtheorem{thmy}{Theorem}
\def\barr{\begin{array}}
\def\earr{\end{array}}
\title{Completely realisable groups}
\author{Georgiana Fasol\u a and Marius T\u arn\u auceanu}
\date{March 15, 2023}
\begin{document}

\maketitle

\begin{abstract}
Given a construction $f$ on groups, we say that a group $G$ is \textit{$f$-realisable} if there is a group $H$ such that $G\cong f(H)$, and \textit{completely $f$-realisable} if there is a group $H$ such that $G\cong f(H)$ and every subgroup of $G$ is isomorphic to $f(H_1)$ for some
subgroup $H_1$ of $H$ and vice versa.

In this paper, we determine completely ${\rm Aut}$-realisable groups. We also study $f$-realisable groups for $f=Z,F,M,D,\Phi$, where $Z(H)$, $F(H)$, $M(H)$, $D(H)$ and $\Phi(H)$ denote the center, the Fitting subgroup, the Chermak-Delgado subgroup, the derived subgroup and the Frattini subgroup of the group $H$, respectively.
\end{abstract}

{\small
\noindent
{\bf MSC2020\,:} Primary 20D30; Secondary 20D45, 20D25.

\noindent
{\bf Key words\,:} inverse group theory, (completely) $f$-realisable groups, automorphism groups, integrals of groups.}

\section{Introduction}

In group theory, there are many constructions $f$ which start from a group $H$ and produce another group $f(H)$. Examples of such group-theoretical constructions are: center, central quotient, derived quotient, Frattini subgroup, Fitting subgroup, Chermak-Delgado subgroup, automorphism group, Schur multiplier, other cohomology groups, and various constructions from permutation groups. For each of these constructions, there is an inverse problem:
\begin{equation}
\mbox{Given a group } G, \mbox{ is there a group } H \mbox{ such that } G\cong f(H)?
\end{equation}

Several new results related to this problem have been obtained in \cite{1,2,6,9,10} for $f(H)=D(H)$, the derived subgroup of $H$. Note that in these papers the group $H$ with the property $G\cong D(H)$ has been called an \textit{integral} of $G$ by analogy with calculus. Moreover, we recall Problem 10.19 in \cite{1} that asks to classify the groups in which all subgroups are integrable. It constitutes the starting point for our discussion.

Other results of the same type are given by \cite{7,12,20,21,22} for $f(H)=\Phi(H)$, the Frattini subgroup of $H$. In this case, there is a precise cha\-rac\-te\-ri\-za\-tion of finite groups $G$ for which (1) has solutions, namely
\begin{equation}
G\cong\Phi(H) \mbox{ for some group } H \mbox{ if and only if } {\rm Inn}(G)\subseteq\Phi({\rm Aut}(G))\nonumber
\end{equation}(see \cite{7}).

The same problem for $f(H)={\rm Aut}(H)$, the automorphism group of $H$, has been studied in \cite{16,17}. We also recall the well-known class of \textit{capable groups}, i.e. the groups $G$ such that (1) has solutions for $f(H)={\rm Inn}(H)$, the inner automorphism group of $H$. Their study was initiated by R. Baer \cite{3} and continued in many other papers (see e.g. \cite{5,8}).

Inspired by these studies, we introduce the following two notions.

Given a construction $f$ on groups, we say that a group $G$ is
\begin{description}
\item[\hspace{10mm}{\rm a)}]\textit{$f$-realisable} if there is a group $H$ such that $G\cong f(H)$
\end{description}and
\begin{description}
\item[\hspace{10mm}{\rm b)}]\textit{completely $f$-realisable} if there is a group $H$ such that:
\begin{itemize}
\item[{\rm i)}] $G\cong f(H)$;
\item[{\rm ii)}] $\forall\, G_1\leq G, \exists\, H_1\leq H$ such that $G_1\cong f(H_1)$;
\item[{\rm iii)}] $\forall\, H_1\leq H, \exists\, G_1\leq G$ such that $f(H_1)\cong G_1$.
\end{itemize}
\end{description}Clearly, if $f$ is monotone, then i) follows from ii) and iii). Clearly, any subgroup of an $f$-realisable group is itself $f$-realisable. This holds, for example, for $f=D$. Also, we observe that completely $D$-realisable groups\footnote{Another suitable name for these groups would be \textit{completely integrable groups}. For such a group $G$, a group $H$ satisfying i)-iii) is called a \textit{complete integral} of $G$.} are solutions of Problem 10.19 in \cite{1}.

Throughout this paper, we assume that the above groups $G$ and $H$ are both finite. In Section 2 we will determine completely ${\rm Aut}$-realisable groups, while in Section 3 we will present several results concerning completely $f$-realisable groups for $f=Z,F,M,D,\Phi$, where $Z(H)$, $F(H)$, $D(H)$ and $\Phi(H)$ denote the center, the Fitting subgroup, the Chermak-Delgado subgroup, the derived subgroup and the Frattini subgroup of the group $H$, respectively.

Most of our notation is standard and will usually not be repeated here. Elementary notions and results on groups can be found in \cite{13,19}.
For subgroup lattice concepts we refer the reader to \cite{18}.

\section{Completely ${\rm Aut}$-realisable groups}

First of all, we recall some results of MacHale \cite{16,17} concerning groups $H$ with a particular automorphism group.

\begin{theorem}
The following hold:
\begin{itemize}
\item[{\rm (a)}] There is no group $H$ such that ${\rm Aut}(H)\cong\mathbb{Z}_m$ for any odd number $m>1$,
\item[{\rm (b)}] There exists a group $H$ such that ${\rm Aut}(H)\cong\mathbb{Z}_{p^2}$ for a prime $p$ if and only if $p=2$ and $H\cong\mathbb{Z}_5$ or $H\cong\mathbb{Z}_{10}$,
\item[{\rm (c)}] There exists a group $H$ such that ${\rm Aut}(H)\cong\mathbb{Z}_p^3$ for a prime $p$ if and only if $p=2$ and $H\cong\mathbb{Z}_{24}$,
\item[{\rm (d)}] There exists a group $H$ such that ${\rm Aut}(H)\cong\mathbb{Z}_p\times\mathbb{Z}_{p^2}$ for a prime $p$ if and only if $p=2$ and $H\cong\mathbb{Z}_{15}$ or $H\cong\mathbb{Z}_{20}$ or $H\cong\mathbb{Z}_{30}$,
\item[{\rm (e)}] There is no group $H$ such that ${\rm Aut}(H)\cong\mathbb{Z}_2^4$.
\end{itemize}
\end{theorem}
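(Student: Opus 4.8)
\medskip
\noindent
Here is the line of attack I would take. The unifying feature of (a)--(e) is that $\mathrm{Aut}(H)$ is always a finite \emph{abelian} group, so $\mathrm{Inn}(H)\cong H/Z(H)$ is abelian, $H'\le Z(H)$, and $H$ is nilpotent of class $\le 2$. When $\mathrm{Aut}(H)$ is moreover cyclic — cases (a) and (b) — then $H/Z(H)$ is cyclic and hence $H$ is abelian. I would then isolate the following reduction: if $H$ is abelian, the primary decomposition gives $\mathrm{Aut}(H)=\prod_p\mathrm{Aut}(H_p)$, and a non-cyclic abelian $p$-group has $GL_d(\mathbb F_p)$ with $d\ge2$ as a quotient of its (hence non-abelian) automorphism group; so $\mathrm{Aut}(H)$ abelian forces every $H_p$, and therefore $H$ itself, to be cyclic, say $H\cong\mathbb Z_n$. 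In that case $\mathrm{Aut}(H)\cong(\mathbb Z/n\mathbb Z)^{\times}\cong\prod_{p^{a}\,\|\,n}(\mathbb Z/p^{a}\mathbb Z)^{\times}$, where $(\mathbb Z/p^{a}\mathbb Z)^{\times}$ is cyclic of order $p^{a-1}(p-1)$ for odd $p$, and $(\mathbb Z/2^{a}\mathbb Z)^{\times}$ is $1$, $\mathbb Z_2$, or $\mathbb Z_2\times\mathbb Z_{2^{a-2}}$ according as $a\le1$, $a=2$, or $a\ge3$.

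For (c), (d), (e) the extra work is to show $H$ is still abelian, even though $\mathrm{Aut}(H)$ is now only an abelian $2$-group. Writing $H=H_2\times A$ with $A$ of odd order, one gets $\mathrm{Aut}(H)=\mathrm{Aut}(H_2)\times\mathrm{Aut}(A)$ with both factors abelian $2$-groups, which already forces $A$ to be cyclic with $|A|$ a product of distinct Fermat primes. It then suffices to rule out $H_2$ non-abelian, and here I would invoke Miller's classical theorem that the smallest non-abelian group with abelian automorphism group has order $64$, together with the observation that for such a group already the central automorphisms give $|\mathrm{Aut}|>16$ (use $\mathrm{Aut}(H_2)=\mathrm{Aut}_c(H_2)\cong\mathrm{Hom}(H_2/H_2',Z(H_2))$ and a short count based on $H_2'\le Z(H_2)$, $H_2/Z(H_2)$ non-cyclic, $|H_2|\ge64$); since $\mathrm{Aut}(H_2)$ would be a direct factor of $\mathrm{Aut}(H)$ and $|\mathrm{Aut}(H)|\le16$, this is impossible. (Alternatively one simply checks that no non-abelian group of order $<64$ has an abelian automorphism group.) This reduction to the abelian — hence cyclic — case is the crux; everything afterwards is elementary arithmetic.

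Once $H\cong\mathbb Z_n$, it remains to solve $(\mathbb Z/n\mathbb Z)^{\times}\cong M$ for the prescribed group $M$. Part (a) is immediate, since $|(\mathbb Z/n\mathbb Z)^{\times}|=\varphi(n)$ is even for every $n\ge3$. For (b)--(d) I would match $M$ prime-power factor by factor; the recurring arithmetic inputs are that $q-1=p$ (respectively $q-1=p^{2}$) has a prime solution $q$ only for $p=2$, giving $q=3$ (respectively $q=5$) — because $p+1$ and $p^{2}+1$ are even and exceed $2$ when $p$ is odd — and that a higher power of an odd prime $q$ dividing $n$ contributes $q$ to $|\mathrm{Aut}(\mathbb Z_n)|$, which the $2$-groups $\mathbb Z_{p^{2}}$, $\mathbb Z_p^{3}$, $\mathbb Z_p\times\mathbb Z_{p^{2}}$ cannot absorb unless $q=p=2$. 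Bookkeeping the $2$-part as well — note $(\mathbb Z/2^{a}\mathbb Z)^{\times}$ has exponent $2$ only for $a\le3$, where its rank is at most $2$ — then pins down $n$ in each of (b), (c), (d), and the ``if'' directions reduce to routine evaluations of $(\mathbb Z/n\mathbb Z)^{\times}$ for the relevant small $n$. Finally, (e) follows from the same bookkeeping: $(\mathbb Z/n\mathbb Z)^{\times}$ is an elementary abelian $2$-group only when $n\mid24$, and the largest rank then attained is $3$ (realised by $n=24$), so $\mathrm{Aut}(H)\cong\mathbb Z_2^{4}$ is impossible. I expect essentially all the difficulty to sit in the reduction to the abelian case for (c)--(e); the rest is a routine, if slightly fiddly, computation with the groups $(\mathbb Z/n\mathbb Z)^{\times}$.
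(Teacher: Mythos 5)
Your overall strategy --- force $H$ to be cyclic and then do arithmetic in $(\mathbb{Z}/n\mathbb{Z})^{\times}$ --- is the right one, and it is essentially how MacHale argues in \cite{16,17}; note that the paper itself offers no proof beyond citing those two papers. Parts (a), (b) and all the ``if'' directions of your sketch are fine. The genuine gap is exactly the step you identify as the crux of (c)--(e): ruling out a non-abelian Sylow $2$-subgroup $H_2$ with ${\rm Aut}(H_2)$ abelian of order at most $16$. You dispose of it by invoking ``Miller's classical theorem'' that the smallest non-abelian group with abelian automorphism group has order $64$ --- true, but itself a nontrivial verification over all non-abelian groups of order at most $32$, i.e.\ precisely the kind of content being outsourced --- and then by an unexecuted ``short count.'' That count does not go through as stated: ${\rm Aut}_c(G)\to{\rm Hom}(G/G',Z(G))$ is in general only an injection (so it gives an upper bound, not a lower bound), and is an isomorphism (Adney--Yen) only when $G$ has no abelian direct factor, a hypothesis you have not secured; and even granting the isomorphism, $|{\rm Hom}(G/G',Z(G))|>16$ does not follow from $|G|\ge 64$, $G'\le Z(G)$ and $G/Z(G)$ non-cyclic alone --- a priori $G/G'$ could have rank $2$ while $Z(G)$ has rank $1$, making the Hom group as small as order $4$, so one must combine the Hom count with $|{\rm Inn}(G)|=|G|/|Z(G)|$ and run a case analysis. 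Part (e) in particular, that $\mathbb{Z}_2^4$ is never an automorphism group, is the subject of the entire paper \cite{17}; your two sentences do not replace that argument.

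Two further points. First, your justification that a non-cyclic abelian $p$-group has non-abelian automorphism group is wrong as stated: ${\rm Aut}(\mathbb{Z}_2\times\mathbb{Z}_4)\cong D_8$ has no quotient isomorphic to $GL_2(\mathbb{F}_2)$, since $6$ does not divide $8$. The conclusion is correct and standard, but it needs a different argument (e.g.\ exhibit two non-commuting automorphisms directly). Second, and more seriously for part (d): if you actually carry out the promised bookkeeping you will find $(\mathbb{Z}/16\mathbb{Z})^{\times}\cong\mathbb{Z}_2\times\mathbb{Z}_4$, so $H\cong\mathbb{Z}_{16}$ is a further solution of ${\rm Aut}(H)\cong\mathbb{Z}_2\times\mathbb{Z}_{2^2}$. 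Hence your claim that the computation ``pins down'' $n$ to $15,20,30$ cannot be right; in fact the statement of (d) as quoted in the paper omits $\mathbb{Z}_{16}$ from MacHale's list, so your method, honestly executed, would refute rather than prove (d) in the form given.
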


\begin{proof}
Parts (a) and (b), respectively, follow from parts (i) and (iv)(c) of Theorem 1 in \cite{16}. Parts (c) and (d) are given by parts (iii) and (iv) of Theorem 2 in \cite{16}. Part (e) is a summarized version of Theorem 2 from \cite{17}.
\end{proof}

In \cite{14}, the solutions $H$ of ${\rm Aut}(H)\cong G$ have been also determined for other important classes of groups $G$ (see e.g. Theorem 4.2 for $G=A_n$, Theorem 4.4 for $G=S_n$ or Theorem 6.3 for $G=D_{2n}$). Also, we point out another interesting result of Ledermann and Neumann \cite{15} which states that for every $n>0$, there exists a bound $f(n)$ such that if $G$ is a finite group with $|G|\geq f(n)$, then $|{\rm Aut}(G)|\geq n$.

We are now able to give a description of completely ${\rm Aut}$-realisable groups.

\begin{theorem}
A group is completely ${\rm Aut}$-realisable if and only if it is an elementary abelian $2$-group of rank at most $3$.
\end{theorem}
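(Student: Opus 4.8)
The plan is to prove the two implications separately. \emph{Sufficiency.} Here I would just exhibit witnesses: for $G=\mathbb{Z}_2^r$ with $r=0,1,2,3$ take $H=1,\ \mathbb{Z}_3,\ \mathbb{Z}_8,\ \mathbb{Z}_{24}$ respectively. One has ${\rm Aut}(\mathbb{Z}_8)\cong\mathbb{Z}_2^2$ by inspection of $(\mathbb{Z}/8\mathbb{Z})^{\times}$ and ${\rm Aut}(\mathbb{Z}_{24})\cong\mathbb{Z}_2^3$ by Theorem~2.1(c), so i) holds. Since the subgroups of $\mathbb{Z}_n$ are exactly the $\mathbb{Z}_d$ with $d\mid n$, conditions ii) and iii) reduce to checking that the set of isomorphism types $\{{\rm Aut}(\mathbb{Z}_d):d\mid n\}$ coincides with the set of isomorphism types of subgroups of $\mathbb{Z}_2^r$. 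For $n=24$, running $d$ through $1,2,3,4,6,8,12,24$ makes ${\rm Aut}(\mathbb{Z}_d)$ exhaust $\{1,\mathbb{Z}_2,\mathbb{Z}_2^2,\mathbb{Z}_2^3\}$, which is precisely the list of subgroup types of $\mathbb{Z}_2^3$; for $r=1,2$ one restricts to the divisors of $3$ and of $8$, and $r=0$ is trivial.

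\emph{Necessity.} Let $G$ be completely ${\rm Aut}$-realisable, witnessed by $H$, so ${\rm Aut}(H)\cong G$. First $G$ is a $2$-group: an odd prime $p\mid|G|$ would give $\mathbb{Z}_p\leq G$, hence by ii) a subgroup $H_1\leq H$ with ${\rm Aut}(H_1)\cong\mathbb{Z}_p$, contradicting Theorem~2.1(a). Next I would show $H$ is cyclic. By iii), ${\rm Aut}(H_1)$ embeds in the $2$-group $G$ for every $H_1\leq H$, so every subgroup of $H$ has a $2$-group automorphism group; in particular $H$ has no subgroup isomorphic to $\mathbb{Z}_p\times\mathbb{Z}_p$ (as ${\rm GL}_2(p)$ is not a $2$-group) nor to $Q_8$ (as ${\rm Aut}(Q_8)\cong S_4$), so all Sylow subgroups of $H$ are cyclic. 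This property passes to quotients, so were $H$ non-abelian the group $H/Z(H)\cong{\rm Inn}(H)\leq{\rm Aut}(H)$ would be a non-trivial $2$-group with cyclic Sylow subgroups, hence cyclic, forcing $H$ abelian; and an abelian group with all Sylow subgroups cyclic is cyclic. Thus $H\cong\mathbb{Z}_n$, and as ${\rm Aut}(\mathbb{Z}_n)\cong G$ is a $2$-group, $n=2^a p_1\cdots p_k$ with the $p_i$ distinct Fermat primes and
\[
G\;\cong\;{\rm Aut}(\mathbb{Z}_{2^a})\times\mathbb{Z}_{p_1-1}\times\cdots\times\mathbb{Z}_{p_k-1},
\]
so $G$ is an abelian $2$-group.

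It remains to bound $G$. Rank at most $3$ is immediate: if $\mathbb{Z}_2^4\leq G$ then by ii) ${\rm Aut}(\mathbb{Z}_d)\cong\mathbb{Z}_2^4$ for some $d\mid n$, contradicting Theorem~2.1(e). For exponent at most $2$, suppose $\mathbb{Z}_4\leq G$, hence also $\mathbb{Z}_2\leq G$. Applying ii) to $\mathbb{Z}_2$ and to $\mathbb{Z}_4$, and using that ${\rm Aut}(\mathbb{Z}_d)\cong\mathbb{Z}_2$ only for $d\in\{3,4,6\}$ and ${\rm Aut}(\mathbb{Z}_d)\cong\mathbb{Z}_4$ only for $d\in\{5,10\}$ (see Theorem~2.1(b)), one gets $5\mid n$ together with $3\mid n$ or $4\mid n$; thus $15\mid n$ or $20\mid n$, and iii) forces $G$ to contain ${\rm Aut}(\mathbb{Z}_{15})\cong\mathbb{Z}_2\times\mathbb{Z}_4$, resp.\ ${\rm Aut}(\mathbb{Z}_{20})\cong\mathbb{Z}_2\times\mathbb{Z}_4$. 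Then $\mathbb{Z}_2^2\leq G$, so by ii) and the fact that ${\rm Aut}(\mathbb{Z}_d)\cong\mathbb{Z}_2^2$ only for $d\in\{8,12\}$ we get $8\mid n$ or $12\mid n$, whence $40\mid n$ or $60\mid n$; by iii) this puts ${\rm Aut}(\mathbb{Z}_{40})\cong\mathbb{Z}_2^2\times\mathbb{Z}_4$ or ${\rm Aut}(\mathbb{Z}_{60})\cong\mathbb{Z}_2^2\times\mathbb{Z}_4$ inside $G$, so $\mathbb{Z}_2^3\leq G$. By ii) and Theorem~2.1(c) this forces $24\mid n$, hence $120\mid n$, and iii) then places ${\rm Aut}(\mathbb{Z}_{120})\cong\mathbb{Z}_2^3\times\mathbb{Z}_4$ in $G$, so $\mathbb{Z}_2^4\leq G$ --- contradicting the rank bound. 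Hence $G$ has exponent at most $2$, which together with the rank bound would complete the argument.

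I expect the main difficulty to be this final ``bootstrapping'': neither ii) nor iii) alone pins $G$ down, so one must alternate between them --- ii) forcing new prime powers to divide $n$, iii) then forcing $G$ to contain ever larger subgroups of the shape $\mathbb{Z}_2^t\times\mathbb{Z}_4$ --- until a copy of $\mathbb{Z}_2^4$ is manufactured and MacHale's non-realisability result (Theorem~2.1(e)) closes the argument. Obtaining the cyclicity of $H$, in particular excluding generalized quaternion Sylow subgroups, is the other step that needs a little care.
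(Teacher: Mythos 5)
Your proof is correct, and while the overall skeleton matches the paper's (reduce to $H$ cyclic of order $2^a$ times distinct Fermat primes, then invoke MacHale's results from Theorem~2.1), two of your key steps take a genuinely different route. First, to show $H$ is cyclic the paper writes $H=A\rtimes P_1$ with $A$ the central squarefree odd part (using Herstein--Adney to get $n_i=1$), rules out a nontrivial action by locating a dihedral subgroup $D_{2p_j}$ whose automorphism group has order $p_j(p_j-1)$, and only then argues that $P_1$ is cyclic via the ``no $\mathbb{Z}_2\times\mathbb{Z}_2$, no $Q_8$'' observation; you instead apply that observation to \emph{every} Sylow subgroup at once, note that cyclic Sylows pass to the quotient $H/Z(H)\cong{\rm Inn}(H)$, and conclude $H$ is abelian from the cyclicity of its central quotient --- a shorter argument that dispenses with Schur--Zassenhaus, Herstein--Adney and the dihedral subgroups. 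Second, for the endgame the paper splits on the number $k$ of prime divisors of $|H|$ and asserts, for instance, that $k\geq 3$ forces $\mathbb{Z}_2^4\leq G$ (which is not immediate: $H=\mathbb{Z}_{15}$ has ${\rm Aut}(H)\cong\mathbb{Z}_2\times\mathbb{Z}_4$ of rank $2$, and one must still invoke condition ii) to kill such cases), whereas your bootstrapping --- alternating ii) to force new divisors $5,\,8$ or $12,\,24$ into $n$ and iii) to force ever larger subgroups $\mathbb{Z}_2^t\times\mathbb{Z}_4$ into $G$ until $\mathbb{Z}_2^4$ appears --- is uniform, avoids the case split on $k$ entirely, and in fact supplies the detail the paper's $k\geq 3$ and $k=2$ cases gloss over. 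The paper's approach is more structural (it identifies $H$ explicitly before bounding $G$); yours leans harder on the interplay of conditions ii) and iii) and is the more airtight of the two at the final step.
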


\begin{proof}
Let $G$ be a completely ${\rm Aut}$-realisable group. Then there is a group $H$ such that:
\begin{itemize}
\item[{\rm i)}] $G\cong{\rm Aut}(H)$;
\item[{\rm ii)}] $\forall\, G_1\leq G, \exists\, H_1\leq H$ such that $G_1\cong{\rm Aut}(H_1)$;
\item[{\rm iii)}] $\forall\, H_1\leq H, \exists\, G_1\leq G$ such that ${\rm Aut}(H_1)\cong G_1$.
\end{itemize}Since the groups $\mathbb{Z}_p$ with $p$ an odd prime are not ${\rm Aut}$-realisable by Theorem 2.1 (a), it follows that $G\cong{\rm Aut}(H)$ is a $2$-group. Then so is ${\rm Inn}(H)\cong\frac{H}{Z(H)}$\,. Let $$|H|=p_1^{n_1}\cdots p_k^{n_k},$$where $p_1=2$ and $p_2$, ..., $p_k$ are odd primes, and denote by $P_i$ a Sylow $p_i$-subgroup of $H$, $\forall\, i=1,\dots,k$. Then $P_2,\dots, P_k\subseteq Z(H)$ and therefore
\begin{equation}
H=A\rtimes P_1, \mbox{ where } A=\prod_{i=2}^k P_i.\nonumber
\end{equation}On the other hand, we have $n_2=\dots=n_k=1$ because $p^2\mid |H|$ implies $p\mid |{\rm Aut}(H)|$ for any prime $p$ (see e.g. \cite{11}). Consequently,
\begin{equation}
A\cong\mathbb{Z}_{p_2\cdots p_k}.\nonumber
\end{equation}

Assume that $H$ is not the direct product of $A$ and $P_1$. Then $H$ contains a subgroup $H_1\cong D_{2p_j}$ for some $j=2,\dots,k$. By iii), there exists $G_1\leq G$ such that ${\rm Aut}(H_1)\cong G_1$. This implies that $|{\rm Aut}(H_1)|=p_j(p_j-1)$ divides $|G|$ and so $p_j$ divides $|G|$, a contradiction. Thus we have $H=A\times P_1$.

Since $A$ and $P_1$ are of coprime orders, we get
\begin{equation}
G\cong{\rm Aut}(H)\cong{\rm Aut}(A)\times{\rm Aut}(P_1)\cong\left(\prod_{i=2}^k\mathbb{Z}_{p_i}^{\times}\right)\times{\rm Aut}(P_1),\nonumber
\end{equation}which shows that
\begin{equation}
|G|=\prod_{i=2}^k(p_i-1)|{\rm Aut}(P_1)|.\nonumber
\end{equation}Since $G$ is a $2$-group, then $p_2,\dots,p_k$ are Fermat primes\footnote{Recall that a Fermat prime is a prime number of the form $2^{2^n}+1$, $n\in\mathbb{N}$.} and $P_1$ is a $2$-group whose auto\-morphism group is also a $2$-group.

Let $K$ be an abelian subgroup of $P_1$. If $K$ is not cyclic, then it contains a subgroup $K_1\cong\mathbb{Z}_2\times\mathbb{Z}_2$. It follows that ${\rm Aut}(K_1)$ is isomorphic to a subgroup of $G$ and therefore $|{\rm Aut}(K_1)|=6$ is a power of $2$, a contradiction. Thus all abelian subgroups of $P_1$ are cyclic, implying that $P_1$ is either cyclic or a generalized quaternion $2$-group (see e.g. (4.4) of \cite{19}, II). Since $Q_{2^{n_1}}$ possesses subgroups isomorphic to $Q_8$ and $|{\rm Aut}(Q_8)|=24$ is not a power of $2$, we infer that $P_1$ is cyclic, i.e. $P_1\cong\mathbb{Z}_{2^{n_1}}$. Then
\begin{equation}
H\cong\mathbb{Z}_{p_2\cdots p_k}\times\mathbb{Z}_{2^{n_1}}\cong\mathbb{Z}_{2^{n_1}p_2\cdots p_k}\nonumber
\end{equation}and so
\begin{equation}
G\cong\mathbb{Z}_{2^{n_1}p_2\cdots p_k}^{\times}.\nonumber
\end{equation}

Note that $\mathbb{Z}_{2^{n_1}}^{\times}$ is trivial for $n_1=0,1$ and $\mathbb{Z}_{2^{n_1}}^{\times}\cong\mathbb{Z}_2\times\mathbb{Z}_{2^{n_1-2}}$ for $n_1\geq 2$.

Assume first that $k=1$, i.e. $H\cong\mathbb{Z}_{2^{n_1}}$ and $G\cong\mathbb{Z}_{2^{n_1}}^{\times}$. If $n_1\geq 4$, then $G$ contains a subgroup $G_1\cong\mathbb{Z}_{2^2}$
and so, by Theorem 2.1 (b), we know that $G_1\cong{\rm Aut}(H_1)$ where $H_1\cong\mathbb{Z}_5$ or $H_1\cong\mathbb{Z}_{10}$. In both these cases, we have that $5$ divides
the order of $H_1$ and thus $5$ divides the order of $H$, which is impossible because $H$ is a $2$-group. Thus $n_1\leq 3$ and we get $G\cong\mathbb{Z}_2^r$, where $r=0,1,2$.

Assume now that $k\geq 3$. Then $G$ contains a subgroup $G_1\cong\mathbb{Z}_2^4$. It follows that $G_1\cong{\rm Aut}(H_1)$ for some subgroup $H_1\leq H$, contradicting Theorem 2.1 (e). Thus $k=2$ and by Theorem 2.1 (c) we have that $3$ divides $|H|$ and so $p_2=3$. If $n_1\geq 4$, then $G$ contains a subgroup $G_1\cong\mathbb{Z}_2\times\mathbb{Z}_{2^2}$ and Theorem 2.1 (d) implies that $5$ divides $|H|$, contradicting the fact that $H$ is a $2$-group. Hence $n_1\leq 3$, leading to
$G\cong\mathbb{Z}_2^r$ with $r=1,2,3$.

Conversely, if $G\cong\mathbb{Z}_2^r$ with $r\leq 3$, then it suffices to take $H=\mathbb{Z}_2$ for $r=0$, $H=\mathbb{Z}_4$ for $r=1$, $H=\mathbb{Z}_8$ for $r=2$ and $H=\mathbb{Z}_8\times\mathbb{Z}_3$ for $r=3$. This completes the proof.
\end{proof}

\section{Completely $f$-realisable groups, where $f=Z,F,M,D,\Phi$}

The problem of determining completely $f$-realisable groups for $f=Z$ and $f=F$ is trivial, namely:

\begin{itemize}
\item[$\bullet$] A group is completely $Z$-realisable if and only if it is abelian.
\item[$\bullet$] A group is completely $F$-realisable if and only if it is nilpotent.
\end{itemize}

The same thing can be also said for $f=M$. Recall that, given a finite group $H$, the \textit{Chermak-Delgado measure} of a subgroup $K$ of $H$ is defined by
\begin{equation}
m_H(K)=|K||C_H(K)|.\nonumber
\end{equation}Let
\begin{equation}
m^*(H)={\rm max}\{m_H(K)\mid K\leq H\} \mbox{ and } {\cal CD}(H)=\{K\leq G\mid m_H(K)=m^*(H)\}.\nonumber
\end{equation}Then the set ${\cal CD}(H)$ forms a modular, self-dual sublattice of the subgroup lattice of $H$, which is called the \textit{Chermak-Delgado lattice} of $H$ (see Theorem 1.44 in  \cite{13}). The minimal member $M(H)$ of ${\cal CD}(H)$ is called the \textit{Chermak-Delgado subgroup} of $H$. Note that $M(H)$ is characteristic, abelian and contains $Z(H)$ by Corollary 1.45 in \cite{13}. So, a (completely) $M$-realisable group is abelian. Conversely, it is clear that every subgroup of an abelian group is its own Chermak-Delgado subgroup. Thus we have:

\begin{itemize}
\item[$\bullet$] A group is completely $M$-realisable if and only if it is abelian.
\end{itemize}

In what follows we will focus on the other two cases. Recall that a group $G$ is completely $D$-realisable if there is a group $H$ such that, up to isomorphism, we have
\begin{equation}
L(G)=\{H_1'\mid H_1\leq H\},
\end{equation}where $L(G)$ denotes the subgroup lattice of $G$. An important class of completely $D$-realisable groups is given by the following theorem.

\begin{theorem}
All abelian groups are completely $D$-realisable.
\end{theorem}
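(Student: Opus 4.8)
The plan is to exhibit, for an arbitrary finite abelian group $G$, a single explicit complete integral, namely the wreath product $H=G\wr\mathbb{Z}_2=(G\times G)\rtimes\langle\tau\rangle$, where $\tau$ has order $2$ and acts on the base $B_0:=G\times G$ by interchanging the two coordinates. Writing $G$ multiplicatively, introduce the monomorphism $\iota\colon G\to B_0$, $\iota(g)=(g,g^{-1})$, and the epimorphism $\psi\colon B_0\to G$, $\psi(x,y)=x^{-1}y$, with kernel the diagonal; note that $\psi$ carries a swapped pair to the inverse, $\psi(y,x)=\psi(x,y)^{-1}$, and that $\tau$ stabilises $\iota(G)$ setwise. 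Since $B_0$ is abelian and $[(x,y),\tau]=(x^{-1}y,y^{-1}x)=\iota(\psi(x,y))$, the first step is the short computation $H'=\iota(G)\cong G$.

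The core of the argument is to read off $\{H_1'\mid H_1\leq H\}$ up to isomorphism. Fix $H_1\leq H$ and set $B:=H_1\cap B_0$, so $B\trianglelefteq H_1$ and $H_1/B$ embeds into $\mathbb{Z}_2$. If $H_1\leq B_0$ then $H_1$ is abelian and $H_1'=1$. Otherwise $H_1=\langle B,t\rangle$ for any $t\in H_1\setminus B_0$. The key observation is that conjugation by any element of $H\setminus B_0$ — whatever its order — induces on $B_0$ exactly the coordinate swap, the abelian $G$-part of such an element contributing nothing; hence $B$ is swap-invariant and $t^2$ centralises $B$. Using that $G$ is abelian, a short commutator computation then gives $[b_1,b_2t]=\iota(\psi(b_1))$ and $[b_1t,b_2t]=\iota(\psi(b_1^{-1}b_2))$ for $b_i\in B$, so that $H_1'=\iota(\psi(B))\cong\psi(B)\leq G$. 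Therefore $\{H_1'\mid H_1\leq H\}\subseteq L(G)$ up to isomorphism.

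For the reverse inclusion, given $G_1\leq G$ put $B:=\psi^{-1}(G_1)$; it is swap-invariant because $\psi$ sends swapped pairs to inverses and $G_1=G_1^{-1}$, so $H_1:=B\rtimes\langle\tau\rangle$ is a subgroup of $H$, and the formula above yields $H_1'=\iota(\psi(B))=\iota(G_1)\cong G_1$. As $1$ is also realised (take $H_1=1$), we obtain $L(G)\subseteq\{H_1'\mid H_1\leq H\}$, and hence equality; thus $H=G\wr\mathbb{Z}_2$ is a complete integral of $G$.

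The one place where care is needed — and the expected main obstacle — is justifying $H_1'=\iota(\psi(B))$ for \emph{every} $H_1\not\leq B_0$, not merely for those that split as $B\rtimes\mathbb{Z}_2$: if $t=(u,v)\tau$ with $uv\neq 1$ then $t^2=(uv,uv)$ is a nontrivial element of $B$, so $H_1$ need not be a semidirect product. The commutator bookkeeping nonetheless closes up precisely because such a $t$ still acts on $B_0$ as the swap (so $t^2$ acts trivially on $B$), which confines all commutators of elements of $H_1$ to $\iota(\psi(B))$, while $[b,t]=\iota(\psi(b))$ gives the reverse containment; this is the only genuinely computational point of the proof.
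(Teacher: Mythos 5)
Your proof is correct and uses the same construction as the paper, namely Guralnick's integral $H=G\wr\mathbb{Z}_2$ of the abelian group $G$, with the reverse inclusion obtained by realising each $G_1\leq G$ inside a suitable subgroup of $H$ containing $\tau$. The only real difference is one of economy: the containment $\{H_1'\mid H_1\leq H\}\subseteq L(G)$, which you establish by an explicit commutator computation, follows in one line from monotonicity of the derived subgroup, since $H_1'\leq H'\cong G$ for every $H_1\leq H$ and $G$ is abelian; your explicit formula $H_1'=\iota(\psi(H_1\cap B_0))$ is a correct and informative refinement but is not needed for the statement.
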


\begin{proof}
Guralnick \cite{10} showed that if $A$ is an abelian group of order $n$, then the group $H=A\wr\mathbb{Z}_2$ is an integral of $A$. Clearly, we have $H_1'\leq A$, for all $H_1\leq H$. Conversely, we observe that if $A_1\leq A$, then $H$ contains a subgroup $H_1\cong A_1\wr\mathbb{Z}_2$ which is an integral of $A_1$.
\end{proof}

Note that there exist completely $D$-realisable non-abelian groups, e.g. $G=A_4$ for which it suffices to take $H=S_4$, and even completely $D$-realisable non-abelian $p$-groups, e.g. $G={\rm He}_3$, the Heisenberg group of order $3^3$, for which it suffices to take $H=SmallGroup(216,36)$.
\bigskip

\noindent{\bf Remark.} We conjecture that $A_4$ is the smallest completely $D$-realisable non-abelian group. The dihedral groups $D_{2n}$ with $n=3,4,5,6$ and the dicyclic group ${\rm Dic}_3$ are not $D$-realisable because each of them has a characteristic cyclic subgroup which is not contained in center (see Proposition 3.1 of \cite{1}). The quaternion group $Q_8$ is $D$-realisable, a group $H$ with $Q_8\cong D(H)$ being necessarily a proper semidirect product $P\rtimes A$, where $P$ is a $2$-group con\-ta\-i\-ning a normal subgroup isomorphic to $Q_8$ and having $D(P)$ cyclic of order $2$ or $4$, and $A$ is an abelian group of odd order\footnote{The smallest integral of $Q_8$ is ${\rm SL}(2,3)\cong Q_8\rtimes\mathbb{Z}_3$ (see \cite{2}).}.

Indeed, if $H$ is a finite group with $Q_8\cong D(H)$ and $P$ is a Sylow $2$-subgroup of $H$ including $D(H)$, then $P$ is normal in $H$ and $H/P$ is abelian. Since $P$ and $H/P$ are of coprime orders, the Schur-Zassenhaus theorem leads to $H\cong P\rtimes A$, where $A\cong H/P$ is abelian of odd order. Moreover, $D(P)$ is a proper subgroup of $D(H)$ because $Q_8$ is not $p$-integrable (see Theorem 4.2 of \cite{2}) and so it is cyclic of order $2$ or $4$.

We note that a GAP search over all groups of order less or equal that $500$ gives no complete integral of $Q_8$.

We also note that the class of completely $D$-realisable groups is properly contained in the class of $D$-realisable groups: $A_5$ is $D$-realisable - we have $A_5=D(S_5)$, but not completely $D$-realisable - it has a subgroup isomorphic to $D_{10}$ which is not $D$-realisable. Since $A_n$ have subgroups of type $A_5$, for all $n\geq 5$, we get:

\begin{theorem}
Alternating groups $A_n$ with $n\geq 5$ are $D$-realisable, but not completely $D$-realisable.
\end{theorem}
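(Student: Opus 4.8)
The plan is to prove the two assertions separately. For $D$-realisability, I would simply recall the standard fact that $D(S_n)=A_n$ for every $n\geq 2$; hence, taking $H=S_n$, we get $A_n\cong D(H)$, so $A_n$ is $D$-realisable with integral $S_n$. (For $n\geq 5$ one could equally note that $A_n$ is perfect, so $D(A_n)=A_n$ and $A_n$ is even its own integral, but $S_n$ already does the job, as already observed in the text for $A_5=D(S_5)$.)

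For the negative part, the key point is that complete $D$-realisability passes to subgroups: if $G$ is completely $D$-realisable via some $H$, then condition ii) forces every subgroup of $G$ to be $D$-realisable. So it suffices to exhibit inside $A_n$ a subgroup that is not $D$-realisable. I would use the chain $D_{10}\leq A_5\leq A_n$: for $n\geq 5$, the copy of $A_5$ acting on five of the $n$ points and fixing the rest is a subgroup of $A_n$, and inside $A_5$ the normalizer of a Sylow $5$-subgroup, e.g. $\langle (1\,2\,3\,4\,5),(2\,5)(3\,4)\rangle$, is dihedral of order $10$.

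It then remains to check that $D_{10}$ is not $D$-realisable, and here I would invoke Proposition 3.1 of \cite{1} (quoted in the Remark above): a $D$-realisable group cannot contain a characteristic cyclic subgroup that is not contained in its center. In $D_{10}$ the subgroup of order $5$ is the unique Sylow $5$-subgroup, hence normal, and being the only subgroup of its order it is characteristic; but $Z(D_{10})=1$, so this cyclic subgroup is characteristic and not central. Thus $D_{10}$ is not $D$-realisable, and consequently no $A_n$ with $n\geq 5$ can be completely $D$-realisable. I do not expect any real obstacle: the argument is essentially an assembly of facts already recorded before the theorem, the only mildly technical points being the standard identities $D(S_n)=A_n$ and $N_{A_5}(P_5)\cong D_{10}$, together with the correct use of condition ii) (rather than the weaker statement that subgroups of merely $D$-realisable groups are $D$-realisable, which fails precisely here).
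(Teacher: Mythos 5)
Your proof is correct and follows essentially the same route as the paper: the positive part via $A_n=D(S_n)$, and the negative part by locating $D_{10}\leq A_5\leq A_n$ and ruling out $D_{10}$ via the characteristic-cyclic-subgroup criterion of Proposition 3.1 of \cite{1}. Your explicit verification that $D_{10}$ embeds in $A_5$ and fails the criterion, and your care in deducing the conclusion from condition ii) rather than from heredity of mere $D$-realisability, only add detail the paper leaves implicit.
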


Next we will focus on completely $\Phi$-realisable groups. Recall that such a group $G$ is nilpotent and satisfies ${\rm Inn}(G)\subseteq\Phi({\rm Aut}(G))$. Again, we have a result similar with Theorem 3.1.

\begin{theorem}
All abelian groups are completely $\Phi$-realisable.
\end{theorem}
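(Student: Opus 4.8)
The plan is to imitate the proof of Theorem~3.1: exhibit an explicit \emph{abelian} witness $H$ and read off Frattini subgroups by means of the elementary identity $\Phi(P)=P^{p}$, valid for every finite abelian $p$-group $P$. First I would reduce to the case of a $p$-group. Decompose a finite abelian group as $G=G_{p_1}\times\cdots\times G_{p_m}$ into its Sylow subgroups and build $H=H_{p_1}\times\cdots\times H_{p_m}$ from witnesses $H_{p_t}$ for the individual factors $G_{p_t}$. Since the $H_{p_t}$ have pairwise coprime orders, every subgroup of $H$ (resp.\ of $G$) is the direct product of its intersections with the $H_{p_t}$ (resp.\ the $G_{p_t}$), and $\Phi$ distributes over such a coprime product; hence conditions i)--iii) for the pair $(G,H)$ follow at once from the same conditions for each pair $(G_{p_t},H_{p_t})$.

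So fix a prime $p$ and write $G=\bigoplus_{i=1}^{k}\mathbb{Z}_{p^{a_i}}$ with $a_1\ge a_2\ge\cdots\ge a_k\ge1$. I would take
\[
H=\bigoplus_{i=1}^{k}\mathbb{Z}_{p^{a_i+1}},
\]
so that $\Phi(H)=H^{p}=\bigoplus_{i=1}^{k}\mathbb{Z}_{p^{a_i}}\cong G$, which settles i). For ii) and iii) the one external ingredient I would invoke is the classical criterion for subgroups of a finite abelian $p$-group: a group of type $(\lambda_1\ge\lambda_2\ge\cdots)$ has a subgroup of type $(\mu_1\ge\mu_2\ge\cdots)$ if and only if $\mu_i\le\lambda_i$ for all $i$. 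Sufficiency is witnessed by $\bigoplus_i p^{\lambda_i-\mu_i}\mathbb{Z}_{p^{\lambda_i}}$; for necessity one compares, for each $j\ge0$, the $\mathbb{F}_p$-ranks of the nested subgroups $p^{j}G_1\le p^{j}G$, obtaining $\#\{i:\mu_i>j\}\le\#\{i:\lambda_i>j\}$ for all $j$, which is exactly the relation $\mu\subseteq\lambda$.

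Granting this, ii) is immediate: a subgroup $G_1\le G$ has type $(\mu_1,\dots,\mu_k)$ with $\mu_i\le a_i$, hence $H$ contains a subgroup $H_1\cong\bigoplus_{\mu_i\ge1}\mathbb{Z}_{p^{\mu_i+1}}$ (its type is componentwise bounded by that of $H$), and $\Phi(H_1)=H_1^{\,p}\cong\bigoplus_{\mu_i\ge1}\mathbb{Z}_{p^{\mu_i}}\cong G_1$. For iii), an arbitrary $H_1\le H$ is abelian of some type $(\nu_1,\dots,\nu_k)$ with $\nu_i\le a_i+1$, so $\Phi(H_1)=H_1^{\,p}$ has type $\big((\nu_i-1)_{+}\big)_i$, which is componentwise bounded by $(a_i)_i$; by the same criterion $G$ has a subgroup isomorphic to $\Phi(H_1)$. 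Re-assembling over $p_1,\dots,p_m$ then completes the argument.

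The only mildly technical point is the bookkeeping with partition types --- checking that the operations $\mu\mapsto(\mu_i+1)$ and $\nu\mapsto(\nu_i-1)_{+}$ keep the parts in weakly decreasing order and preserve domination by $(a_i+1)$, respectively $(a_i)$, after re-indexing. I do not anticipate any genuine obstacle: as with Theorem~3.1 the construction is explicit and the verification is purely formal once $\Phi(P)=P^{p}$ and the subgroup criterion above are in hand.
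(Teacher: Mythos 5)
Your proposal is correct and follows exactly the paper's route: reduce to abelian $p$-groups using the fact that $\Phi$ respects coprime direct products, then take $H=\bigoplus_i\mathbb{Z}_{p^{a_i+1}}$ as the witness. The paper simply declares conditions ii) and iii) ``clear'' for this $H$; your verification via $\Phi(P)=P^p$ and the partition-domination criterion for subgroups of abelian $p$-groups is a sound filling-in of that step.
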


\begin{proof}
Given an abelian group $G$, we have to prove that there exists a group $H$ such that:
\begin{itemize}
\item[{\rm i)}] $G\cong\Phi(H)$;
\item[{\rm ii)}] $\forall\, G_1\leq G, \exists\, H_1\leq H$ such that $G_1\cong\Phi(H_1)$;
\item[{\rm iii)}] $\forall\, H_1\leq H, \exists\, G_1\leq G$ such that $\Phi(H_1)\cong G_1$.
\end{itemize}Since $\Phi$ is completely multiplicative, that is
\begin{equation}
\hspace{10mm}\Phi(\prod_{i=1}^m G_i)\cong\prod_{i=1}^m \Phi(G_i) \mbox{ for all groups } G_i,\, i=1,...,m,\nonumber
\end{equation}it suffices to assume that $G$ is an abelian $p$-group and to prove that there exists a $p$-group $H$ with the above properties. This is clear because for $G\cong\prod_{i=1}^k\mathbb{Z}_{p^{n_i}}$ we can choose $H\cong\prod_{i=1}^k\mathbb{Z}_{p^{n_i+1}}$.
\end{proof}

Note that all $\Phi$-realisable $p$-groups of order $p^3$ are abelian (see e.g. Lemma 1 of \cite{12}). The same thing can be also said about $\Phi$-realisable $2$-groups of order $2^4$ (see e.g. Theorem 1 of \cite{20}). An example of a $\Phi$-realisable non-abelian $5$-group of order $5^4$ is presented in Remark 5 of \cite{4}:
\begin{equation}
G=\langle x,y,z,t\mid x^5=y^5=z^5=t^5=1, [z,t]=x, [x,t]=[y,t]=l\rangle\nonumber
\end{equation}for which we can choose
\begin{align*}
H=&\langle u,v,w,x,y,z\mid u^5=v^5=w^5=x^5=y^5=z^5=1, [v,w]=[v,x]=[v,z]\\
&=[x,y]=1, [v,y]=[x,w]=[w,y]=u, [w,z]=v, [x,z]=w, [y, z]=x\rangle.\nonumber
\end{align*}

Finally, we remark that there exist $\Phi$-realisable groups that are not completely $\Phi$-realisable, for example $G=\mathbb{Z}_2\times Q_8$ - we have $G=\Phi(H)$, where $H=SmallGroup(64,9)$, but $G$ contains a subgroup isomorphic to $Q_8$ which is the Frattini subgroup of no group.
\bigskip

We end this paper by proposing the following two open problems:
\bigskip

\noindent{\bf Problem 1.} Classify completely $D$-realisable and completely $\Phi$-realisable groups.
\bigskip

\noindent{\bf Problem 2.} Study (completely) $f$-realisable groups for other constructions $f$ on groups, e.g. when $f(H)$ is the Carter subgroup of the finite solvable group $H$.
\bigskip

\noindent{\bf Acknowledgement.} The authors are grateful to the reviewer for remarks which improve the previous version of the paper.

\vspace*{3ex}
\small

\begin{minipage}[t]{7cm}
Georgiana Fasol\u a \\
Faculty of  Mathematics \\
"Al.I. Cuza" University \\
Ia\c si, Romania \\
e-mail: \!{\tt georgiana.fasola@student.uaic.ro}
\end{minipage}
\hfill\hspace{20mm}
\begin{minipage}[t]{5cm}
Marius T\u arn\u auceanu \\
Faculty of  Mathematics \\
"Al.I. Cuza" University \\
Ia\c si, Romania \\
e-mail: \!{\tt tarnauc@uaic.ro}
\end{minipage}


\begin{thebibliography}{10}
\bibitem{1} J. Ara\'{u}jo, P.J. Cameron, C. Casolo and F. Matucci, {\it Integrals of groups}, Israel. J. Math. {\bf 234} (2019), 149-178.
\bibitem{2} J. Ara\'{u}jo, P.J. Cameron, C. Casolo, F. Matucci, C. Quadrelli, {\it Integrals of groups}, II, to appear in Israel. J. Math.
\bibitem{3} R. Baer, {\it Groups with preassigned central and central quotient group}, Trans. Amer. Math. Soc. {\bf 44} (1938), 378–412.
\bibitem{4} H. Bechtell, {\it Frattini subgroups and $\Phi$-central groups}, Pacific J. Math. {\bf 18} (1966), 15–23.
\bibitem{5} F.R. Beyl, U. Felgner and P. Schmid, {\it On groups occuring as centre factor groups}, J. Algebra {\bf 61} (1979), 161–177.
\bibitem{6} R. Blyth, F. Fumagalli, F. Matucci, {\it On some questions related to integrable groups}, to appear in Ann. Mat. Pura Appl.
\bibitem{7} B. Eick, {\it The converse of a theorem of W. Gasch\"{u}tz on Frattini subgroups}, Math. Z. {\bf 224} (1997), 103-111.
\bibitem{8} G. Ellis, {\it On the capability of groups}, Proc. Edinburgh Math. Soc. {\bf 41} (1998), 487-495.
\bibitem{9} K. Filom and B. Miraftab, {\it Integral of groups}, Comm. Algebra {\bf 45} (2017), 1105–1113.
\bibitem{10} R.M. Guralnick, {\it On groups with decomposable commutator subgroups}, Glasgow Math. J. {\bf 19} (1978), 159–162.
\bibitem{11} I.N. Herstein and J.E. Adney, {\it A note on the automorphism group of a finite group}, Amer. Math. Monthly {\bf 59} (1952), 309–310.
\bibitem{12} W. Mack Hill and Donald B. Parker, {\it The nilpotence class of the Frattini subgroup}, Israel J. Math. {\bf 15} (1973), 211–215.
\bibitem{13} I.M. Isaacs, {\it Finite group theory}, Amer. Math. Soc., Providence, R.I., 2008.
\bibitem{14} H.K. Iyer, {\it On solving the equation ${\rm Aut}(X)=G$}, Rocky Mountain J. Math. {\bf 9} (1979), 653-670.
\bibitem{15} W. Ledermann and B.H. Neumann, {\it On the order of the automorphism group of a finite group} I, Proc. Roy. Soc. London. Ser. A. {\bf 233} (1956), 494-506.
\bibitem{16} D. MacHale, {\it Some finite groups which are rarely automorphism groups}, I, Proc. Roy. Irish. Acad. Sect. A. {\bf 81A} (1981), 209–215.
\bibitem{17} D. MacHale, {\it Some finite groups which are rarely automorphism groups}, II, Proc. Roy. Irish. Acad. Sect. A. {\bf 83A} (1983), 189–196.
\bibitem{18} R. Schmidt, {\it Subgroup lattices of groups}, de Gruyter Expositions in Ma\-the\-ma\-tics 14, de Gruyter, Berlin, 1994.
\bibitem{19} M. Suzuki, {\it Group theory}, I, II, Springer Verlag, Berlin, 1982, 1986.
\bibitem{20} R.W. van der Waall, {\it Certain normal subgroups of the Frattini subgroup of a finite group}, Indag. Math. {\bf 77} (1974), 382-386.
\bibitem{21} R.W. van der Waall and C.H.W.M. de Nijs, {\it On the embedding of a finite group as Frattini subgroup}, Bull. Belg. Math. Soc. Simon Stevin  {\bf 2} (1995), 519-527.
\bibitem{22} C.R.B. Wright, {\it Frattini embeddings of normal subgroups}, Proc. Amer. Math. Soc. {\bf 78} (1980), 319–320.
\end{thebibliography}
\end{document}